\definecolor{darkblue}{rgb}{0.0,0.0,0.3}
\theoremstyle{plain}
\newtheorem{theorem}{Theorem}
\newtheorem*{theorem*}{Theorem}
\newtheorem{lemma}[theorem]{Lemma}
\newtheorem{proposition}[theorem]{Proposition}
\newtheorem*{proposition*}{Proposition}
\newtheorem{corollary}[theorem]{Corollary}
\newtheorem*{corollary*}{Corollary}
\theoremstyle{definition}
\newtheorem{remark}[theorem]{Remark}
\renewcommand{\Im}{\operatorname{Im}}
\renewcommand{\Re}{\operatorname{Re}}
\DeclareMathOperator{\SL}{SL}
\DeclareMathOperator{\GL}{GL}
\title[Sign changes on sums of two squares]{Sign changes of cusp form
coefficients on indices that are sums of two squares}
\author{David Lowry-Duda}
\email{davidlowryduda@brown.edu}
\address{ICERM, 121 South Main Street, Box E, 11th Floor, Providence, RI 02903}
\thanks{This work was supported by the Simons Collaboration in Arithmetic
Geometry, Number Theory, and Computation via the Simons Foundation grant
546235.}
\begin{document}

\begin{abstract}
  We study sign changes in the sequence $\{ A(n) : n = c^2 + d^2 \}$, where
  $A(n)$ are the coefficients of a holomorphic cuspidal Hecke eigenform.
  After proving a variant of an axiomatization for detecting and
  quantifying sign changes introduced by Meher and Murty,
  we show that there are at least $X^{\frac{1}{4} - \epsilon}$ sign changes in
  each interval $[X, 2X]$ for $X \gg 1$. This improves to $X^{\frac{1}{2} -
  \epsilon}$ many sign changes assuming the Generalized Lindel\"{o}f Hypothesis.
\end{abstract}

\maketitle

\section{Introduction}\label{sec:intro}

Let $f = \sum A(n) n^{\frac{k-1}{2}} q^n$ denote a weight $k$ holomorphic Hecke
cuspidal eigenform on the full modular group $\SL(2, \mathbb{Z})$, normalized so that
$A(1) = 1$.
Sato-Tate equidistribution implies that the sequence $\{ A(n) \}_{\mathbb{N}}$
contains infinitely many sign changes.

Quantitatively, one can show that for $X \gg 1$, the
sequence $\{ A(n) \}_{\mathbb{N}}$ changes sign for at least one $n$ in the
interval $[X, X + X^{\frac{3}{5} + \epsilon}]$.
One way to prove this is to consider
a set of criteria first axiomatized by Meher and Murty~\cite{mehermurty14}.
For a general sequence of real numbers $\{ a(n) \}_\mathbb{N}$, Murty and Meher
specify conditions on the sizes of $a(n), \sum_{n \leq
X} a(n)$, and $\sum_{n \leq X} \lvert a(n)\rvert^2$ that allow quantitative
counts of sign changes.
The idea is simple: if $\sum_{n \leq X} a(n)$ is small while $\sum_{n \leq X}
\lvert a(n) \rvert^2$ is large (and if each $a(n)$ isn't too big), then there
must be lots of sign changes.

With the axiomatization due to Meher and Murty, sign changes in each
interval $[X, X + X^{\frac{3}{5} + \epsilon}]$ follow from Deligne's
bound~\cite{deligne71} $A(n) \ll n^{\epsilon}$, the very significant
cancellation in the partial sums~\cite{hafnerivic89, hkldw1}
$\sum_{n \leq X} A(n) \ll X^{\frac{1}{3} + \epsilon}$, and the
Rankin--Selberg 
asymptotic~\cite{rankin, selberg1}
$\sum_{n \leq X} \lvert A(n) \rvert^2 = cX + O(X^{\frac{3}{5}})$.
This result, as well as variations and other generalizations of the
Meher--Murty 
axiomatization to complex coefficients, are given in~\cite{hkldw3}.

A more difficult question is to study sign changes among sparse subsequences of
coefficients, which is related to more refined studies of the
distribution of values. Recently several such
results (for example,~\cite{kohnen2014_primepowersignchanges, gkr2015_simulsignchanges,
kohnen, hkkl2012_signchanges, jllrw2019}) have appeared.

In this paper, we investigate sign changes in
$\{ A(n) : n = c^2 + d^2, (c, d) \in \mathbb{N}^2\}$, the subsequence
consisting of coefficients whose indices are sums of two squares.
Landau~\cite{landau1908} showed that the number of integers up to $X$ that can
be written as a sum of two squares is of the order $X/\sqrt{\log X}$, and thus
this is a density $0$ subsequence. In~\cite{banerjeepandey20}, Banerjee and
Pandey show that this sequence has at least $X^{\frac{1}{8} - \epsilon}$ many
sign changes for $n \in [X, 2X]$ for $X \gg 1$.

Our first result is to strengthen this theorem.

\begin{theorem}\label{thm:main}
  Let $f = \sum A(n) n^{\frac{k-1}{2}} q^n$ be a weight $k$ holomorphic cuspidal Hecke
  eigenform on $\SL(2, \mathbb{Z})$. For any $\epsilon > 0$ and all $X \gg 1$, the
  sequence $\{ A(n) : n = c^2 + d^2, (c, d) \in \mathbb{N}^2 \}$ has has least
  one sign change among indices $n = c^2 + d^2$ with
  $n \in [X, X + X^{\frac{3}{4} + \epsilon}]$. Thus there are at least
  $X^{\frac{1}{4} - \epsilon}$ sign changes in the interval $[X, 2X]$ for all
  $X \gg 1$.
\end{theorem}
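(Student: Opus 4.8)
The plan is to apply the (variant of the) Meher--Murty axiomatization not to $\{A(n)\}$ directly but to the sequence $b(n) := A(n)\,r_2(n)$, where $r_2(n) = \#\{(c,d)\in\mathbb{Z}^2 : c^2+d^2 = n\}$ counts representations as a sum of two squares. Since $r_2(n)\ge 0$, and $r_2(n) > 0$ exactly when $n$ is a sum of two squares, the sign changes of $\{b(n)\}$ coincide with those of $\{A(n) : n = c^2+d^2\}$, so counting one counts the other. It therefore suffices to verify the three hypotheses of the axiomatization for $b$: a pointwise bound, a cancellation estimate for $\sum_{n\le Y} b(n)$, and a second-moment lower bound of the correct order. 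The pointwise bound $b(n)\ll n^{\epsilon}$ is immediate from Deligne's bound together with $r_2(n)\ll n^{\epsilon}$.

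The main input is the cancellation estimate $\sum_{n\le Y} A(n) r_2(n) \ll Y^{3/4+\epsilon}$. First I would identify the generating Dirichlet series: writing $r_2(n)/4 = \sum_{d\mid n}\chi_{-4}(d)$, the coefficients $r_2(n)/4$ are those of a weight-one Eisenstein series, and Rankin--Selberg unfolding yields
\[
  D(s) := \sum_{n\ge 1}\frac{A(n)\,r_2(n)}{n^s} = 4\,\frac{L(s,f)\,L(s, f\otimes\chi_{-4})}{L(2s,\chi_{-4})},
\]
valid up to a bounded Euler factor at $2$, in the analytic normalization where $L(s,f)$ has critical line $\Re s = \tfrac12$. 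Because $f$ is cuspidal, $L(s,f)$ and $L(s, f\otimes\chi_{-4})$ are entire, while $L(2s,\chi_{-4})$ is holomorphic and nonvanishing for $\Re s \ge \tfrac12$; hence $D(s)$ is holomorphic on $\Re s \ge \tfrac12$. I would run a truncated Perron formula at height $T$ and shift the contour from $\Re s = 1+\epsilon$ to $\Re s = \tfrac12$, picking up no poles. On the critical line the convexity bounds $L(\tfrac12+it, f),\,L(\tfrac12+it, f\otimes\chi_{-4}) \ll (1+|t|)^{1/2+\epsilon}$ give $D(\tfrac12+it)\ll (1+|t|)^{1+\epsilon}$; balancing the vertical contribution $\ll Y^{1/2}T^{1+\epsilon}$ against the truncation error $\ll Y^{1+\epsilon}/T$ at $T = Y^{1/4}$ produces $\sum_{n\le Y} A(n) r_2(n)\ll Y^{3/4+\epsilon}$. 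Replacing convexity by the Lindel\"of bound $D(\tfrac12+it)\ll (1+|t|)^{\epsilon}$ and re-optimizing at $T = Y^{1/2}$ gives the conditional improvement to $Y^{1/2+\epsilon}$.

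For the second moment I would use the companion series $\sum_{n\ge 1} A(n)^2 r_2(n)\,n^{-s}$, the Rankin--Selberg convolution of $f$ against $\theta^2 = \sum_n r_2(n)q^n$, which inherits a simple pole at $s=1$ from the $\zeta$-factor of $L(s, f\times f)$ (the $\chi_{-4}$-part being nonzero there). A standard contour argument then gives $\sum_{n\le Y} A(n)^2 r_2(n) = cY + O(Y^{\eta})$ with $c>0$ and some $\eta<\tfrac34$. Since $r_2(n)\ge 1$ on its support, the pointwise inequality $b(n)^2 = A(n)^2 r_2(n)^2 \ge A(n)^2 r_2(n)$ yields $\sum_{n\le Y} b(n)^2 \ge cY + O(Y^{\eta})$, the required lower bound of order $Y$.

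With these three inputs the axiomatization applies. Concretely, on any interval $I = [X, X+H]$ on which $b$ does not change sign one has $\sum_{n\in I} b(n)^2 \le \bigl(\max_{n\in I}|b(n)|\bigr)\,\bigl|\sum_{n\in I} b(n)\bigr| \ll X^{\epsilon}\cdot X^{3/4+\epsilon}$, while the second-moment estimate forces $\sum_{n\in I} b(n)^2 \gg H$; taking $H = X^{3/4+\epsilon}$ with the exponent chosen to dominate the two cancellation epsilons produces a contradiction, giving a sign change in $[X, X+X^{3/4+\epsilon}]$. Partitioning $[X,2X]$ into $\gg X^{1/4-\epsilon}$ disjoint intervals of this length yields the stated count, and under the Lindel\"of Hypothesis the same argument with $H = X^{1/2+\epsilon}$ gives $\gg X^{1/2-\epsilon}$ sign changes. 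The main obstacle is the first-moment bound: everything rests on the holomorphy of $D(s)$ across $\Re s = \tfrac12$ and on the quality of the subconvexity available for $L(s,f)$ and $L(s, f\otimes\chi_{-4})$, which is precisely what the exponents $\tfrac34$ (unconditional) and $\tfrac12$ (under Lindel\"of) reflect.
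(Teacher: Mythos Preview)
Your overall architecture is right and matches the paper: pass to $b(n)=A(n)r_2(n)$, feed pointwise, first-moment, and second-moment information into a Meher--Murty style contradiction. Your factorization $\sum A(n)r_2(n)n^{-s} \doteq 4\,L(s,f)L(s,f\otimes\chi_{-4})/L(2s,\chi_{-4})$ is correct (since $\theta^2$ is the weight-one Eisenstein series attached to $1\boxplus\chi_{-4}$) and gives $\beta=\tfrac34$ from convexity, exactly as the paper's Lemma on contour-shift bounds would. The paper actually does better here ($\beta=\tfrac35$ via Landau's method applied to the degree-$4$ Rankin--Selberg $L$-function), but as it points out, $\beta$ is not the bottleneck.

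The genuine gap is in the second moment. First, $\sum_n A(n)^2 r_2(n)n^{-s}$ is \emph{not} the Rankin--Selberg convolution of $f$ with $\theta^2$ (that is your first-moment series); rather, by the same $1\boxplus\chi_{-4}$ logic it factors as
\[
\frac{L(s,f\otimes f)}{\zeta(2s)}\cdot\frac{L(s,f\otimes f\otimes\chi_{-4})}{L(2s,\chi_{-4})}\cdot U(s),
\]
with $U(s)$ an Euler product absolutely convergent for $\Re s>\tfrac12$. Second, and more seriously, your claim that ``a standard contour argument gives $\eta<\tfrac34$'' is exactly the nontrivial step. On the line $\Re s=\tfrac12$ each of $L(s,f\otimes f)$ and $L(s,f\otimes f\otimes\chi_{-4})$ has convexity bound $(1+|t|)^{1+\epsilon}$; plugging this into Perron and balancing gives only $\eta=\tfrac56$, which yields sign changes in $[X,X+X^{5/6+\epsilon}]$, not $X^{3/4+\epsilon}$. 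To reach $\eta=\tfrac34$ the paper invokes a mean-square estimate of Perelli type, $\frac{1}{T}\int_1^T |L(\tfrac12+\epsilon+it,f\otimes f)|^2\,dt \ll T^{1+\epsilon}$ (and likewise for the $\chi_{-4}$-twist), applies Cauchy--Schwarz to the vertical integral, and only then balances at $T=X^{1/4}$. Your sketch needs to name and use this input. Relatedly, your closing remark that ``the main obstacle is the first-moment bound'' is backwards: the paper makes explicit that the barrier to improving Theorem~\ref{thm:main} is precisely $\eta$, i.e.\ the error term in $S_2(X)$.
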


More generally, we establish criteria guaranteeing sign changes similar to the
Meher--Murty 
axiomatization (see Theorem~\ref{thm:criteria}).
We also note that we restrict to forms on $\SL(2, \mathbb{Z})$ purely for
simplicity of presentation --- the techniques apply to forms of any level.
Applying this criteria, we parametrize Theorem~\ref{thm:main} in terms of
progress towards certain moment and subconvexity bounds.
Assuming the strongest conjectured bounds, our methodology proves the
following.

\begin{theorem}\label{thm:conj}
  Assume the Generalized Lindel\"{o}f Hypothesis.
  In the notation of Theorem~\ref{thm:main}, for any $\epsilon > 0$ and all $X
  \gg 1$, the sequence $\{ A(n) : n = c^2 + d^2, (c, d) \in \mathbb{N}^2 \}$
  has has least one sign change among indices $n = c^2 + d^2$ with $n \in [X, X
  + X^{\frac{1}{2} + \epsilon}]$. There are at least $X^{\frac{1}{2} -
  \epsilon}$ sign changes in the interval $[X, 2X]$ for all $X \gg 1$.
\end{theorem}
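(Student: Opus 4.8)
The plan is to apply the sign-change criteria of Theorem~\ref{thm:criteria} to the weighted sequence $a(n) := A(n)\,r(n)$, where $r(n) = \#\{(c,d)\in\mathbb{Z}^2 : c^2+d^2 = n\}$ counts representations as a sum of two squares. Since $r(n)\ge 0$ and $r(n) > 0$ precisely when $n$ is a sum of two squares, the sign of $a(n)$ agrees with the sign of $A(n)$ on exactly the indices of interest; thus a sign change of $\{a(n)\}$ is the same as a sign change of $\{A(n) : n = c^2+d^2\}$. Equivalently one works with the lattice-point sums $\sum_{c^2+d^2\le X} A(c^2+d^2) = \sum_{n\le X} A(n) r(n)$ and $\sum_{c^2+d^2\le X} A(c^2+d^2)^2 = \sum_{n\le X} A(n)^2 r(n)$. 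To invoke Theorem~\ref{thm:criteria} I would supply three inputs: the individual bound $a(n)\ll n^\epsilon$, immediate from Deligne's bound $A(n)\ll n^\epsilon$ together with $r(n)\ll n^\epsilon$; the first-moment cancellation bound $\sum_{n\le X} A(n) r(n)\ll X^{1/2+\epsilon}$; and the second-moment asymptotic $\sum_{n\le X} A(n)^2 r(n) = cX + O(X^{1/2+\epsilon})$ with $c>0$.

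Both analytic inputs come from a factorization of the relevant Dirichlet series. Writing $\chi_{-4}$ for the nontrivial character modulo $4$, the identity $r(n) = 4\sum_{d\mid n}\chi_{-4}(d)$ exhibits $\sum_n r(n)q^n$ as a weight-one Eisenstein series, so the Rankin--Selberg convolution of $f$ against it factors: up to a benign factor $L(2s,\chi_{-4})^{-1}$ and finitely many Euler factors at $2$,
\begin{equation}
  \sum_{n\ge 1}\frac{A(n)\,r(n)}{n^s} \;=\; L(f,s)\,L(f\otimes\chi_{-4},s)\,G(s),
\end{equation}
where $G$ is holomorphic and bounded in $\Re s\ge \tfrac12+\epsilon$. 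Because $f$ and $f\otimes\chi_{-4}$ are cuspidal the right-hand side has no pole there, reflecting the absence of a main term in the first moment. Likewise $\sum_n A(n)^2 r(n)\,n^{-s}$ is, up to such factors, $\zeta(s)\,L(\mathrm{sym}^2 f,s)$ twisted by the $\chi_{-4}$-structure of $r(n)$; it carries a simple pole at $s=1$ whose residue is the constant $c$, and $c>0$ because $A(n)^2 r(n)\ge 0$.

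Under the Generalized Lindel\"{o}f Hypothesis, $L(f,\tfrac12+it)$ and $L(f\otimes\chi_{-4},\tfrac12+it)$ are each $\ll (1+|t|)^\epsilon$, and similarly for the symmetric-square object. Feeding these into a smoothed Perron formula and shifting the contour to $\Re s=\tfrac12+\epsilon$ yields $\sum_{n\le X} A(n) r(n)\ll X^{1/2+\epsilon}$ and the error term $O(X^{1/2+\epsilon})$ in the second moment. With these two estimates the criteria argument runs as usual: if $\{A(n):n=c^2+d^2\}$ had no sign change on $[X,X+H]$, then the fixed sign lets one replace the cancelling first moment by the non-negative quantity $\sum_{X\le n\le X+H}|A(n)|\,r(n)$, which is $\gg X^{-\epsilon}\sum_{X\le n\le X+H} A(n)^2 r(n)\gg X^{-\epsilon}H$ once $H\ge X^{1/2+\epsilon}$ (the second-moment error is then $o(H)$). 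Comparing with the first-moment bound $O(X^{1/2+\epsilon})$ forces $H\ll X^{1/2+2\epsilon}$, so every interval of length $X^{1/2+\epsilon}$ must contain a sign change; tiling $[X,2X]$ by $\asymp X^{1/2-\epsilon}$ such intervals gives the stated count.

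The main obstacle is the first-moment bound $\sum_{n\le X} A(n) r(n)\ll X^{1/2+\epsilon}$: this is precisely where genuine square-root cancellation in a product of two $\GL(2)$ $L$-functions is needed, and where the hypothesis enters, since convexity alone yields only the exponent $\tfrac34$ of Theorem~\ref{thm:main}. Making this rigorous requires controlling the vertical integral of $L(f,s)\,L(f\otimes\chi_{-4},s)$ on $\Re s=\tfrac12+\epsilon$ --- effectively a mean value in $t$ --- which under GLH is routine but is the crux of the estimate; the same care is needed to push the second-moment error below $H$ and to confirm positivity of the leading constant $c$.
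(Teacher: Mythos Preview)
Your approach is essentially the paper's: reduce to the weighted sequence $A(n)r_2(n)$, invoke the sign-change criterion (Theorem~\ref{thm:criteria}) with $\alpha=\epsilon$ and $\gamma=1$, and obtain $\beta=\eta=\tfrac12$ under GLH by shifting a Perron integral to $\Re s=\tfrac12+\epsilon$. The paper packages the contour-shift step as Lemma~\ref{lem:prime_to_not_prime}, but the content is the same.

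Two small points. First, your factorization $\sum A(n)r(n)n^{-s}\approx L(f,s)L(f\otimes\chi_{-4},s)$ is correct and slightly more explicit than the paper, which treats $L(s,f\otimes\theta^2)$ as a single degree-$4$ Rankin--Selberg object; either viewpoint suffices under GLH. For the second moment, however, the paper's decomposition~\eqref{eq:Ltilde_equals} is a product of \emph{two} Rankin--Selberg $L$-functions $L(s,f\otimes f)\cdot L(s,f\otimes f\otimes\chi_{-4})$ times an Euler product $U(s)$ convergent only for $\Re s>\tfrac12$; your phrase ``$\zeta(s)L(\mathrm{sym}^2 f,s)$ twisted by the $\chi_{-4}$-structure'' understates this, though under GLH the distinction is harmless. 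Second, your closing paragraph misidentifies the bottleneck: unconditionally the first moment already satisfies $\beta=\tfrac35$ (Proposition~\ref{prop:S1bound}), and it is the \emph{second}-moment error $\eta=\tfrac34$ that forces the exponent in Theorem~\ref{thm:main}. This does not affect the validity of your argument under GLH, where both exponents drop to $\tfrac12$.
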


\section{Overview of Proof}

To prove our results, we generalize~\cite{mehermurty14} and
refine the original arguments of~\cite{banerjeepandey20}.
If we wanted to use the axiomatized criteria for sign changes
from~\cite{mehermurty14}, we would study the partial sums
\begin{equation*}
  \sum_{\substack{n \leq X \\ n = c^2 + d^2}} A(n)
  \qquad \text{and} \qquad
  \sum_{\substack{n \leq X \\ n = c^2 + d^2}} A^2(n).
\end{equation*}
But it is instead much easier to study the partial sums
\begin{equation*}
  S_1(X) := \sum_{n \leq X} A(n)r_2(n)
  \qquad \text{and} \qquad
  S_2(X) := \sum_{n \leq X} A^2(n) r_2(n),
\end{equation*}
where $r_2(n) = \# \{ n = c^2 + d^2 : c, d \in \mathbb{Z} \}$ is the number of
ways of writing $n$ as a sum of $2$ squares. Multiplying by $r_2(n)$ restricts
the indices to those $n$ that can be written as sums of $2$ squares but does
not affect the sign of $A(n)$, as $r_2(n)$ is nonnegative. Thus we study sign
changes in the sequence $\{ A(c^2 + d^2) \}$ by studying sign changes of
$\{ A(n) r_2(n) \}$.

To detect these sign changes, we can adapt the axiomatization
of~\cite{mehermurty14} in terms of bounds for $A(n), S_1(X)$, and $S_2(X)$.

\begin{theorem}[Sign change criteria]\label{thm:criteria}
  Let $w(n) \geq 0$ denote a system of nonnegative weights.
  Suppose a sequence of real numbers $\{ A(n) \}_{\mathbb{N}}$ satisfies
  \begin{enumerate}
    \item $A(n) \ll n^{\alpha + \epsilon}$,
    \item $\sum_{n \leq X} A(n)w(n) \ll X^{\beta + \epsilon}$,
    \item $\sum_{n \leq X} A^2(n)w(n) = c X^{\gamma} + O(X^{\eta + \epsilon})$,
  \end{enumerate}
  for all $\epsilon > 0$, where $\alpha, \beta, c, \gamma$, and $\eta$ are
  positive real constants. Then for any $r$ with
  \begin{equation*}
    \max(\alpha + \beta, \eta) - (\gamma - 1) < r < 1,
  \end{equation*}
  the sequence $\{A(n)w(n) \}_{\mathbb{N}}$ has at least one sign change for $n$ in the interval
  $[X, X + X^r]$ for all $X \gg 1$.
\end{theorem}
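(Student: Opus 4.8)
The plan is to run the Meher--Murty contradiction argument, adapted to the weighted setting. Suppose, for contradiction, that $\{A(n)w(n)\}$ has no sign change for $n \in [X, X+X^r]$, meaning all nonzero terms of the sequence in this window share a common sign. Since $w(n) \geq 0$, after possibly replacing $A$ by $-A$ (which leaves hypotheses (2) and (3) intact, as (3) involves only $A^2$ and (2) is an upper bound on absolute value), I may assume $A(n)w(n) \geq 0$ for all $n$ in the interval. The strategy is then to estimate the short second-moment sum $\sum_{X \leq n \leq X+X^r} A^2(n)w(n)$ from above using (1) and (2), and from below using (3), and to observe that the hypothesis on $r$ forces the two estimates to collide.

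For the upper bound, I would first note that $r < 1$ gives $X + X^r \asymp X$, so differencing hypothesis (2) at the two endpoints yields
\[
  \sum_{X \leq n \leq X+X^r} A(n)w(n)
  = \sum_{n \leq X+X^r} A(n)w(n) - \sum_{n \leq X} A(n)w(n)
  \ll X^{\beta + \epsilon}.
\]
Under the no-sign-change assumption the left-hand side equals $\sum_{X \leq n \leq X+X^r} \lvert A(n)w(n)\rvert$. Factoring $A^2(n)w(n) = \lvert A(n)\rvert \cdot \lvert A(n)w(n)\rvert$ and inserting the pointwise bound (1), I obtain
\[
  \sum_{X \leq n \leq X+X^r} A^2(n)w(n)
  \ll X^{\alpha+\epsilon}\sum_{X \leq n \leq X+X^r}\lvert A(n)w(n)\rvert
  \ll X^{\alpha+\beta+\epsilon}.
\]

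For the lower bound, I would difference hypothesis (3) at the same endpoints. A Taylor expansion, valid because $r < 1$, gives $(X+X^r)^\gamma - X^\gamma = \gamma X^{\gamma-1+r} + O(X^{\gamma + 2(r-1)})$, so
\[
  \sum_{X \leq n \leq X+X^r} A^2(n)w(n)
  = c\gamma X^{\gamma - 1 + r}\bigl(1 + o(1)\bigr) + O(X^{\eta+\epsilon}).
\]
Here the assumption $r > \eta - (\gamma-1)$ guarantees the main term dominates the error term, so $\sum_{X \leq n \leq X+X^r} A^2(n)w(n) \gg X^{\gamma-1+r}$.

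Comparing the two estimates forces $X^{\gamma-1+r} \ll X^{\alpha+\beta+\epsilon}$, that is $\gamma-1+r \leq \alpha+\beta$; but the hypothesis $r > (\alpha+\beta)-(\gamma-1)$ asserts the reverse, so for $\epsilon$ sufficiently small and $X \gg 1$ we reach a contradiction, proving the claimed sign change. The argument is short, and the only genuine obstacle is the bookkeeping around the $\max$: one must check that the lower bound $X^{\gamma-1+r}$ simultaneously beats \emph{both} error terms, which is exactly why both $r > \eta-(\gamma-1)$ (to dominate the error in (3)) and $r > (\alpha+\beta)-(\gamma-1)$ (to contradict the upper bound) appear in the hypothesis. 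The strict inequality $r < 1$ is what legitimizes both $X+X^r \asymp X$ and the Taylor step.
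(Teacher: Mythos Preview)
Your proof is correct and follows essentially the same approach as the paper's: assume no sign change on $[X,X+X^r]$, bound the short second-moment sum above by $X^{\alpha+\beta+\epsilon}$ via hypotheses (1) and (2), bound it below by $X^{\gamma-1+r}$ via hypothesis (3) and a Taylor expansion, and derive a contradiction from the assumed range of $r$. Your write-up is in fact slightly more explicit than the paper's about why $r<1$ is needed and how the two constraints in the $\max$ arise.
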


\begin{proof}
  We note that all implicit constants in this proof may depend on $\epsilon,
  \alpha, \beta, \gamma$, and $\eta$. Suppose for the sake of contradiction
  that $A(n) w(n)$ has constant sign (WLOG nonnegative) for all $n \in [X, X
  + X^r]$. Choose
  \begin{equation*}
    \epsilon < \frac{r + (\gamma - 1) - \max(\alpha + \beta, \eta)}{2}.
  \end{equation*}
  Then on the one hand, we have that
  \begin{equation*}
    \sum_{X \leq n \leq X + X^r} A^2(n) w(n)
    \ll
    X^{\alpha + \epsilon} \sum_{X \leq n \leq X + X^r} A(n) w(n)
    \ll
    X^{\alpha + \beta + 2\epsilon},
  \end{equation*}
  where we note that the upper bound $r < 1$ removes further dependence on $r$.
  On the other hand, we have
  \begin{align*}
    \sum_{X \leq n \leq X + X^r} A^2(n) w(n)
    &=
    c((X + X^r)^\gamma - X^\gamma) + O(X^{\eta + \epsilon})
    \\
    &=
    \gamma c X^{\gamma - 1 + r} + O(X^{\eta + \epsilon} + X^{\gamma - 2 + 2r}).
  \end{align*}
  As $r + (\gamma - 1) > \eta + \epsilon$, these two equations imply that
  $X^{\gamma - 1 + r} \ll X^{\alpha + \beta + 2\epsilon}$, contradicting the
  hypotheses. Thus $A(n) w(n)$ changes sign at least once in the interval
  $[X, X + X^r]$.
\end{proof}

In the remainder of this paper, we apply this theorem with $w(n) = r_2(n)$.
To quantify sign changes, it is thus sufficient to study $S_1$ and $S_2$.
We can understand these sums in terms of automorphic $L$-functions. Let
$\theta(z) = \sum_{n \in \mathbb{Z}} e^{2 \pi i n^2 z}$ denote the classical
theta function, which is a weight $\frac{1}{2}$ form on $\Gamma_0(4)$.
Then $\theta^2(z) = 1 + \sum_{n \geq 1} r_2(n) e^{2 \pi i n z}$
is a modular form of weight $1$ on $\Gamma_0(4)$ with character $\chi_{-4}$,
where $\chi_{-4} = ( \frac{-1}{\cdot} )$ is the unique character mod $4$ with
$\chi_{-4}(3) = -1$.
Define the Dirichlet series
\begin{align*}
  L(s, f \otimes \theta^2) &:= L(2s, \chi_{-4}) \sum_{n \geq 1} \frac{A(n) r_2(n)}{n^s}, \\
  L(s, f \otimes f) &:= \zeta(2s) \sum_{n \geq 1} \frac{A^2(n)}{n^s}, \\
  L(s, f \otimes f \otimes \chi_{-4}) &:= L(2s, \chi_{-4}) \sum_{n \geq 1} \frac{\chi_{-4}(n) A^2(n)}{n^s}.
\end{align*}
These are understandable Rankin--Selberg 
convolution $L$-functions that converge absolutely for $\Re s > 1$ and
have meromorphic continuation to $\mathbb{C}$.

In addition, we define the Dirichlet series
\begin{equation*}
  \widetilde{L}(s, f^2 \times \theta^2)
  := \sum_{n \geq 1} \frac{A^2(n) (r_2(n) / 4)}{n^s},
\end{equation*}
which is \emph{not} an $L$-function.
The coefficients $r_2(n)/4$ are multiplicative
(as $r_2(n) / 4 = \sum_{d \mid n} \chi_{-4}(d)$). Thus the coefficients of
$\widetilde{L}(s, f^2 \times \theta^2)$ are multiplicative. Banerjee and
Pandey~\cite[Lemma~3.1]{banerjeepandey20} show that
\begin{equation}\label{eq:Ltilde_equals}
  \widetilde{L}(s, f^2 \times \theta^2)
  =
  \frac{L(s, f \otimes f)}{\zeta(2s)}
  \frac{L(s, f \otimes f \otimes \chi_{-4})}{L(2s, \chi_{-4})}
  U(s),
\end{equation}
where $U(s)$ is an Euler product that converges absolutely for $\Re s > \frac{1}{2}$.

\begin{remark}
  We note that we use the notation $L(s, f \otimes g)$ to refer to the
  Rankin--Selberg 
  $L$-function, including the associated Dirichlet $L$-function or zeta
  function. This is notationally different than in~\cite{banerjeepandey20}. We
  note that certain equations in~\cite{banerjeepandey20} have
  $\zeta(2s)$ instead of $L(2s, \chi_{-4})$ in Rankin--Selberg 
  $L$-functions, but these do not affect their results.
\end{remark}

Bounds for the parameters in Theorem~\ref{thm:criteria} are related to
subconvexity and moment bounds. Suppose $\beta$ and $\beta'$ satisfy
\begin{align}
  S_1(T) = \sum_{n \leq T} A(n) r_2(n)
  &\ll
  T^{\beta + \epsilon},%
  \label{line:beta_bound}
  \\
  \frac{1}{T} \int_{1}^T
  \lvert L(\tfrac{1}{2} + \epsilon + it, f \otimes \theta^2) \rvert dt
  &\ll
  T^{\beta' + \epsilon}%
  \label{line:betaprime_bound}
\end{align}
for all $\epsilon > 0$ as $T \to \infty$. Similarly, suppose $\eta, \eta'$,
and $\eta''$ satisfy
\begin{align}
  S_2(T) = \sum_{n \leq T} A^2(n) r_2(n)
  &=
  c_f T
  +
  O(T^{\eta + \epsilon}),%
  \label{line:eta_bound}
  \\
  \frac{1}{T} \int_{1}^T
  \lvert L(\tfrac{1}{2} + \epsilon + iT, f \otimes g) \rvert^2
  dt
  &\ll
  (1 + \lvert T \rvert)^{\eta' + \epsilon}%
  \label{line:etaprime_bound1}
  \\
  \lvert L(\tfrac{1}{2} + \epsilon + iT, f \otimes g) \rvert
  &\ll
  (1 + \lvert T \rvert)^{\eta'' + \epsilon}%
  \label{line:etaprime_bound2}
\end{align}
where the same bounds apply for both $g = f$ and $g =
f \otimes \chi_{-4}$ and for all $\epsilon > 0$ as $T \to \infty$.
The decomposition~\eqref{eq:Ltilde_equals} implies that
$\widetilde{L}(s, f^2 \otimes \theta^2)$ has a simple pole at $s = 1$ and is
otherwise analytic for $\Re s > \frac{1}{2}$, which explains the shape
of~\eqref{line:eta_bound}.
Here and below, all implicit constants are allowed to depend on $\epsilon$.

The notation $\beta$ and $\eta$ here agrees with their application via
Theorem~\ref{thm:criteria}.
We can relate bounds for $\beta', \eta'$, and $\eta''$ to bounds for $\beta$
and $\eta$.

\begin{lemma}\label{lem:prime_to_not_prime}
  In the notation just above, we have that
  \begin{equation*}
    \beta \leq 1 - \frac{1}{2(1 + \beta')}
  \end{equation*}
  and
  \begin{equation*}
    \eta \leq 1 - \frac{1}{2(1 + \max(\eta', 2\eta'' - 1))}.
  \end{equation*}
\end{lemma}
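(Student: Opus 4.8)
The plan is to prove both inequalities by a single contour-integration argument, run once for each partial sum. In each case I would represent the relevant partial sum via a truncated Perron formula, rewrite the underlying Dirichlet series in terms of the $L$-functions introduced above, shift the contour to the line $\Re s = \tfrac12 + \epsilon$, bound each piece of the resulting contour with the hypothesized moment and pointwise estimates, and finally optimize over the truncation height $U$. The only structural difference between the two cases is the presence of a pole, hence a main term, in the second.

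For the bound on $\beta$ I would start from the identity
\[
  \sum_{n \geq 1}\frac{A(n)r_2(n)}{n^s} = \frac{L(s, f\otimes\theta^2)}{L(2s,\chi_{-4})},
\]
which is analytic for $\Re s > \tfrac12$, since the numerator is entire (as $f$ is cuspidal and $\theta^2$ is Eisenstein) and the denominator is nonvanishing and bounded away from $0$ on $\Re s = \tfrac12 + \epsilon$. Truncating Perron's formula at height $U$ and shifting to $\Re s = \tfrac12 + \epsilon$, the truncation error, the near-diagonal terms, and the horizontal segments together contribute $\ll T^{1+\epsilon}/U$ (the horizontal pieces being dominated by this after the convexity bound for a degree-four $L$-function), while the vertical segment contributes
\[
  \ll T^{1/2+\epsilon}\int_1^U \frac{\lvert L(\tfrac12 + \epsilon + it, f\otimes\theta^2)\rvert}{t}\,dt \ll T^{1/2+\epsilon}U^{\beta'+\epsilon}
\]
after a dyadic decomposition that invokes the first-moment hypothesis \eqref{line:betaprime_bound}. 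Balancing $T^{1/2}U^{\beta'}$ against $T/U$ with $U = T^{1/(2(1+\beta'))}$ yields $S_1(T) \ll T^{1 - 1/(2(1+\beta')) + \epsilon}$, which is exactly the claimed bound for $\beta$.

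For the bound on $\eta$ I would run the same argument on $\sum_n A^2(n)r_2(n)n^{-s} = 4\widetilde{L}(s, f^2\times\theta^2)$. By \eqref{eq:Ltilde_equals}, on the line $\Re s = \tfrac12 + \epsilon$ the auxiliary factors $\zeta(2s)^{-1}$, $L(2s,\chi_{-4})^{-1}$, and $U(s)$ are all bounded and bounded away from $0$, so $\lvert\widetilde{L}\rvert$ is controlled by the product $\lvert L(s, f\otimes f)\,L(s, f\otimes f\otimes\chi_{-4})\rvert$ of two degree-four $L$-functions. Shifting past the simple pole at $s=1$ produces the main term $c_f T$. The vertical segment is then bounded by Cauchy--Schwarz together with the second-moment hypothesis \eqref{line:etaprime_bound1}, giving $\ll T^{1/2+\epsilon}U^{\eta'+\epsilon}$, whereas the horizontal segments at height $U$ are bounded using the pointwise hypothesis \eqref{line:etaprime_bound2}, which near $\Re s = \tfrac12$ contributes $\ll T^{1/2+\epsilon}U^{2\eta''-1+\epsilon}$. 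Collecting these with the Perron error $\ll T^{1+\epsilon}/U$ gives
\[
  S_2(T) - c_f T \ll T^{1/2+\epsilon}U^{\max(\eta',\,2\eta''-1)+\epsilon} + \frac{T^{1+\epsilon}}{U},
\]
and optimizing with $U = T^{1/(2(1+\max(\eta',\,2\eta''-1)))}$ yields the second inequality.

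The main obstacle, and the precise reason the pointwise bound \eqref{line:etaprime_bound2} is needed at all, is the treatment of the horizontal segments in the second case: this is exactly where the exponent $2\eta''-1$ enters, and since it sits alongside the $\eta'$ coming from the vertical segment it cannot be discarded, which is what forces the $\max$. The rest is bookkeeping that I would carry out carefully but expect to be routine: checking that the auxiliary factors in \eqref{eq:Ltilde_equals} are genuinely negligible on $\Re s = \tfrac12 + \epsilon$, that the only pole crossed is the simple one at $s = 1$ (so that the main term is $c_f T$ and no spurious residue appears for $S_1$), and that the near-diagonal terms of the truncated Perron formula are absorbed into $T^{1+\epsilon}/U$.
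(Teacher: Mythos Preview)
Your proposal is correct and follows essentially the same route as the paper: truncated Perron, contour shift to $\Re s=\tfrac12+\epsilon$, dyadic use of the moment hypotheses on the vertical segment (with Cauchy--Schwarz in the $S_2$ case), the pointwise/convexity bound on the horizontal segments, and final balancing in the truncation height. You have also correctly isolated the mechanism by which $2\eta''-1$ enters through the horizontal pieces and why it appears inside the $\max$.
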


This result is straightforward, but technical.
We defer the proof to \S\ref{ssec:lem_prime_proof}.

In terms of the parameters of Theorem~\ref{thm:criteria}, Deligne's bound
shows that we can take $\alpha = \epsilon$. The pole
in~\eqref{eq:Ltilde_equals} at $s = 1$ gives that $\gamma = 1$.
It only remains to study bounds for $\beta$ and $\eta$.

Computations with the convexity bound
$\lvert L(\frac{1}{2} + \epsilon + it, f \otimes g)\rvert
\ll
(1 + \lvert t \rvert)^{1 + \epsilon}$ (which is true for each of $g = \theta^2$, $g =
f$, and $g = f \otimes \chi_{-4}$)
imply that we can take $\beta' = 1, \eta' = 2$,
and $\eta'' = 1$.
By Lemma~\ref{lem:prime_to_not_prime}, we get ``convexity bounds'' $\beta =
\frac{3}{4} + \epsilon$ and $\eta= \frac{5}{6} + \epsilon$.
As a corollary to Theorem~\ref{thm:criteria}, we get the following quantitative
sign change result.

\begin{corollary}[Sign changes from convexity bounds]
  The sequence $\{ A(n) r_2(n) \}$ has a sign change at least once in each interval $[X, X
  + X^{r}]$ for each $r > \frac{5}{6}$ and $X \gg 1$.
\end{corollary}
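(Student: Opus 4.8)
The plan is to deduce this directly from Theorem~\ref{thm:criteria}, applied with the nonnegative weights $w(n) = r_2(n)$, after collecting the four parameter values $\alpha, \beta, \gamma, \eta$ supplied by the preceding discussion. First I would record these values. Deligne's bound $A(n) \ll n^{\epsilon}$ gives hypothesis~(1) with $\alpha = 0$. The simple pole of $\widetilde{L}(s, f^2 \times \theta^2)$ at $s = 1$, coming from the decomposition~\eqref{eq:Ltilde_equals} and converted into an asymptotic for $S_2(X)$ by a standard Perron/Tauberian argument, furnishes the main term $c_f X$ in hypothesis~(3) and hence $\gamma = 1$. For the two remaining exponents I would invoke Lemma~\ref{lem:prime_to_not_prime}: feeding in the convexity value $\beta' = 1$ yields $\beta \leq 1 - \tfrac{1}{2(1 + 1)} = \tfrac{3}{4}$, and feeding in $\eta' = 2$, $\eta'' = 1$ yields $\eta \leq 1 - \tfrac{1}{2(1 + \max(2, 1))} = \tfrac{5}{6}$.

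With these in hand, the remaining step is arithmetic. The threshold appearing in Theorem~\ref{thm:criteria} is
\[
  \max(\alpha + \beta, \eta) - (\gamma - 1)
  = \max\bigl(\tfrac{3}{4}, \tfrac{5}{6}\bigr) - 0
  = \tfrac{5}{6},
\]
so the theorem already produces a sign change of $\{ A(n) r_2(n) \}$ in every interval $[X, X + X^r]$ with $\tfrac{5}{6} < r < 1$, for all $X \gg 1$. To reach the stated range $r > \tfrac{5}{6}$ without the upper constraint $r < 1$, I would argue by monotonicity of the intervals: fix any $r \geq 1$ together with some $r_0 \in (\tfrac{5}{6}, 1)$; since $X^{r_0} \leq X^{r}$ for $X \geq 1$, the interval $[X, X + X^r]$ contains $[X, X + X^{r_0}]$, which already carries a sign change, so the larger interval does as well.

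There is no genuine obstacle at the level of the corollary itself, since all of the analytic content has been front-loaded into the parameter bounds. The one point demanding care is checking that the triple $(S_1, S_2, A)$ with $w = r_2$ literally satisfies the hypotheses of Theorem~\ref{thm:criteria} --- in particular that hypothesis~(3) holds as a clean asymptotic $c_f X + O(X^{5/6 + \epsilon})$ rather than merely an upper bound, since the proof of Theorem~\ref{thm:criteria} plays the main-term growth $\gamma c X^{\gamma - 1 + r}$ against the truncated $A^2 w$ sum. This is precisely what the pole structure of $\widetilde{L}(s, f^2 \times \theta^2)$ guarantees and what Lemma~\ref{lem:prime_to_not_prime} quantifies. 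Once that asymptotic is secured, the inequality chain in the proof of Theorem~\ref{thm:criteria} closes automatically: for $\tfrac{5}{6} < r < 1$ one has $\gamma - 1 + r = r$, which strictly exceeds both $\eta = \tfrac{5}{6}$ and $\alpha + \beta = \tfrac{3}{4}$, and the corollary follows.
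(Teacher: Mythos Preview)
Your proposal is correct and follows exactly the route the paper intends: collect the parameters $\alpha=0$, $\gamma=1$, and the convexity-derived bounds $\beta\le\tfrac34$, $\eta\le\tfrac56$ via Lemma~\ref{lem:prime_to_not_prime}, then feed these into Theorem~\ref{thm:criteria}. Your explicit monotonicity step covering $r\ge 1$ and your remark that hypothesis~(3) must be a genuine asymptotic (secured by the pole of $\widetilde{L}$) are both welcome clarifications the paper leaves implicit.
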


Better results than the convexity bound exist in the literature.
Each Rankin--Selberg 
$L$-function we use is one of Perelli's ``general $L$-functions'',
and~\cite[Theorem~4]{perelli82} shows that one can take $\eta' = 1$.

\begin{remark}
  There is a missing term in Perelli's Theorem. This is discussed
  in~\cite[\S3]{matsumoto99}. This does not affect the bound for $\eta'$.
  We note that~\cite{ivic2001meanvalues, kanemitsu2002} prove related bounds in
  different ways.
\end{remark}

Instead of bounding $\beta'$, it is better to directly produce bounds for
$S_1(X)$ directly by mimicking Rankin's original techniques to bound
coefficients of cusp forms~\cite{rankin}.
The idea of the proof is now well-known: one first applies a general method of
Landau to get good bounds on the coefficients of $L(s, f \otimes \theta^2)$,
which are essentially the desired coefficients convolved with $\chi_{-4}$.
To recover bounds for $S_1(X)$, we apply a variant of M\"{o}bius inversion and
a simple sieve.
We defer the details to \S\ref{ssec:prop_S1_proof}, but this proves the
following bound.

\begin{proposition}\label{prop:S1bound}
  With the same notation as above, we have that
  \begin{equation}
    S_1(X) = \sum_{n \leq X} A(n) r_2(n) \ll X^{\frac{3}{5} + \epsilon}.
  \end{equation}
\end{proposition}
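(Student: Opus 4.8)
The plan is to bound the summatory function of the coefficients of the degree-four $L$-function $L(s, f \otimes \theta^2)$ by Landau's method, and then sieve back down to $S_1(X)$. First I would record the relevant analytic facts. Since $r_2(n)/4 = \sum_{d \mid n} \chi_{-4}(d)$, a standard Rankin--Ramanujan computation (identical in spirit to $\sum_n A(n) d(n) n^{-s} = L(s,f)^2 / \zeta(2s)$) factors the Dirichlet series as
\[
  \sum_{n \geq 1} \frac{A(n) r_2(n)}{n^s}
  = 4\,\frac{L(s, f)\, L(s, f \otimes \chi_{-4})}{L(2s, \chi_{-4})},
\]
so that $L(s, f \otimes \theta^2) = 4\, L(s, f)\, L(s, f \otimes \chi_{-4})$. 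Because $f$ is cuspidal, both factors are entire; hence $L(s, f \otimes \theta^2)$ is an entire $L$-function of degree $4$ satisfying a standard functional equation in $s \mapsto 1 - s$ with four gamma factors. In particular it has no pole, consistent with the absence of a main term in Proposition~\ref{prop:S1bound}.

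Writing $L(s, f \otimes \theta^2) = \sum_n b(n) n^{-s}$, the definition gives $b(n) = \sum_{d^2 m = n} \chi_{-4}(d) A(m) r_2(m)$; these are exactly the desired coefficients convolved with $\chi_{-4}$ on square indices. I would then apply Landau's classical method for summatory functions of a Dirichlet series with a functional equation, following Rankin~\cite{rankin}: insert a smoothed partial sum via Perron's formula, shift the contour to the left, and balance the gamma-factor decay against the convexity bound on the critical line. For a degree-$d$ entire $L$-function this yields $\sum_{n \leq X} b(n) \ll X^{(d-1)/(d+1)+\epsilon}$, and with $d = 4$ this is precisely $X^{3/5+\epsilon}$ --- the exact analogue of the $X^{1/3}$ bound for $\sum_{n\leq X} A(n)$ in the degree-$2$ case.

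To recover $S_1(X)$ I would invert the factor $L(2s, \chi_{-4})$. As $\chi_{-4}$ is completely multiplicative, $1/L(2s, \chi_{-4}) = \sum_d \mu(d)\chi_{-4}(d) d^{-2s}$, so $A(m) r_2(m) = \sum_{d^2 n = m} \mu(d)\chi_{-4}(d) b(n)$ and therefore
\[
  S_1(X) = \sum_{d \leq \sqrt{X}} \mu(d)\chi_{-4}(d) \sum_{n \leq X/d^2} b(n).
\]
Feeding in the Landau bound and summing termwise yields
\[
  S_1(X)
  \ll
  \sum_{d \leq \sqrt{X}} \Big(\frac{X}{d^2}\Big)^{3/5+\epsilon}
  \ll
  X^{3/5+\epsilon} \sum_{d \geq 1} d^{-6/5}
  \ll
  X^{3/5+\epsilon},
\]
the sum over $d$ converging absolutely. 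This is the promised ``variant of M\"obius inversion and a simple sieve,'' and no boundary terms obstruct the inversion.

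The main obstacle is the Landau bound of the middle step. The sieve inversion is routine once it is in place; the real work lies in (i) pinning down the exact gamma factors and conductor of the functional equation for $L(s, f \otimes \theta^2)$ so that the contour shift is justified, and (ii) executing the convexity-plus-gamma-decay optimization that produces precisely the exponent $\tfrac{3}{5}$. One must also confirm that no residual main term survives the contour shift; this is exactly where entirety matters, since a pole at $s = 1$ would contribute a term of size $X$ and destroy the estimate --- and it is the factorization into cuspidal $\GL(2)$ $L$-functions that guarantees this entirety.
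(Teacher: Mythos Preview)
Your argument is correct and follows the same two-step skeleton as the paper: apply Landau's method (in the Chandrasekharan--Narasimhan form) to the coefficients $b(n)$ of the degree-$4$ $L$-function $L(s,f\otimes\theta^2)$ to get $\sum_{n\le X}b(n)\ll X^{3/5+\epsilon}$, then strip off the factor $L(2s,\chi_{-4})$ by the M\"obius-type inversion and sum over $d$. The sieve step you wrote is identical to the paper's.

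The genuine difference is in how you obtain the analytic input. The paper constructs $L(s,f\otimes\theta^2)$ by a Rankin--Selberg unfolding of $y^{(k+1)/2}f\,\overline{\theta^2}$ against the weight-$(k-1)$ real-analytic Eisenstein series on $\Gamma_0(4)$ with character $\chi_{-4}$, and reads off the functional equation from that of the completed Eisenstein series. You instead use the Hecke relation $A(m)A(n)=\sum_{d\mid(m,n)}A(mn/d^2)$ together with $r_2/4=1*\chi_{-4}$ to factor
\[
L(s,f\otimes\theta^2)=4\,L(s,f)\,L(s,f\otimes\chi_{-4}),
\]
which the paper does not record. This is a cleaner route for the present purpose: entirety is immediate since both factors are cuspidal $\GL(2)$ $L$-functions, the degree is visibly $4$, and the functional equation is the product of the two individual ones, so Landau's exponent $(d-1)/(d+1)=3/5$ drops out without setting up any Eisenstein machinery. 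The paper's unfolding, on the other hand, packages the functional equation in one stroke and does not require knowing the conductor and root number of the twist $f\otimes\chi_{-4}$. Either way the Landau step needs the pointwise bound $b(n)\ll n^{\epsilon}$, which the paper checks explicitly; in your formulation it follows at once from Deligne's bound applied to each factor of the convolution.
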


Applying Lemma~\ref{lem:prime_to_not_prime} and Theorem~\ref{thm:criteria} with
$\eta' = 1$ (the Perelli-type bound), $\eta'' = 1$ (the convexity bound), and
$\beta = \frac{3}{5}$ (the bound from Proposition~\ref{prop:S1bound}) gives the
following.

\begin{corollary}\label{cor:simple_bound}
  The sequence $\{ A(n) r_2(n) \}$ change signs at least once in each interval $[X, X
  + X^{r}]$ for each $r > \frac{3}{4}$ and $X \gg 1$.
\end{corollary}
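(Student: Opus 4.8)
The plan is to collect the parameter values already assembled in the preceding discussion and substitute them into the sign-change criterion of Theorem~\ref{thm:criteria}, applied with the weight $w(n) = r_2(n)$. First I would record the three direct inputs: Deligne's bound $A(n) \ll n^{\epsilon}$ lets us take $\alpha = \epsilon$ (negligibly small) in hypothesis~(1); the simple pole of $\widetilde{L}(s, f^2 \times \theta^2)$ at $s = 1$ coming from the decomposition~\eqref{eq:Ltilde_equals} gives $\gamma = 1$ in hypothesis~(3); and Proposition~\ref{prop:S1bound} supplies $\beta = \tfrac{3}{5}$ in hypothesis~(2).

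The one value still to be produced is $\eta$. Here I would feed the Perelli-type second-moment exponent $\eta' = 1$ and the convexity exponent $\eta'' = 1$ — both valid for $g = f$ and $g = f \otimes \chi_{-4}$, per~\eqref{line:etaprime_bound1} and~\eqref{line:etaprime_bound2} — into Lemma~\ref{lem:prime_to_not_prime}. Since $\max(\eta', 2\eta'' - 1) = \max(1, 1) = 1$, the lemma yields
\begin{equation*}
  \eta \leq 1 - \frac{1}{2(1 + 1)} = \frac{3}{4},
\end{equation*}
which is exactly the error exponent needed to control the short-interval second moment $S_2$ appearing in hypothesis~(3).

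With $\alpha = \epsilon$, $\beta = \tfrac{3}{5}$, $\gamma = 1$, and $\eta = \tfrac{3}{4}$ in hand, the admissibility condition of Theorem~\ref{thm:criteria} becomes
\begin{equation*}
  \max(\alpha + \beta, \eta) - (\gamma - 1)
  = \max\!\left(\tfrac{3}{5}, \tfrac{3}{4}\right)
  = \tfrac{3}{4} < r < 1.
\end{equation*}
Thus the theorem guarantees a sign change of $A(n) r_2(n)$ in each interval $[X, X + X^r]$ for every $r \in (\tfrac{3}{4}, 1)$ and all $X \gg 1$. For $r \geq 1$ the interval $[X, X + X^r]$ contains one of the form $[X, X + X^{r'}]$ with $\tfrac{3}{4} < r' < 1$, so the conclusion persists, giving the stated range $r > \tfrac{3}{4}$.

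Within the corollary itself there is no genuine obstacle: it is a bookkeeping substitution. The real work lives upstream — in Proposition~\ref{prop:S1bound} for the first-moment cancellation $\beta = \tfrac{3}{5}$, and in the Perelli bound $\eta' = 1$ feeding Lemma~\ref{lem:prime_to_not_prime}. The only point deserving care is checking which term dominates $\max(\alpha + \beta, \eta)$: since $\eta = \tfrac{3}{4}$ exceeds $\alpha + \beta = \tfrac{3}{5}$, the binding constraint is the second-moment error term rather than the cancellation in $S_1$, and it is this that fixes the threshold at $\tfrac{3}{4}$.
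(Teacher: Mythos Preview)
Your proposal is correct and matches the paper's approach exactly: the paper derives the corollary by feeding $\eta' = 1$ (Perelli), $\eta'' = 1$ (convexity), and $\beta = \tfrac{3}{5}$ (Proposition~\ref{prop:S1bound}) into Lemma~\ref{lem:prime_to_not_prime} and then Theorem~\ref{thm:criteria}, just as you do. Your observation that $\eta = \tfrac{3}{4}$ dominates $\alpha + \beta \approx \tfrac{3}{5}$ is also made explicitly in the paper immediately after the corollary.
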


This implies Theorem~\ref{thm:main} from \S\ref{sec:intro}.
We note that the improved bound from Proposition~\ref{prop:S1bound} does not yield an
improved sign change result, but instead reveals that the current barrier
towards better results is understanding $\eta$.

\section{Conjectural Limits of these Techniques}\label{sec:conj}

There has been substantial work on subconvexity bounds for $\GL(2)$ type
$L$-functions and $\GL(2) \times \GL(2)$ type Rankin--Selberg 
convolutions. In extreme generality, Michel and
Venkatesh~\cite{michel_venkatesh_gl2subconvexity} have shown that there is a
computable exponent $\delta$ such that
$L(\frac{1}{2} + it, f \otimes g) \ll (1 + \lvert t \rvert)^{1 - \delta}$,
which implies that there is some choice $\eta'' < 1$ for~\eqref{line:etaprime_bound2}.

More recently, improved explicit bounds have begun to appear. In his dissertation,
Raju~\cite[Theorem~3.1.1]{raju2019circle} proves that
\begin{equation*}
  L(\tfrac{1}{2} + it, f \times g)
  \ll_\epsilon
  (1 + \lvert t \rvert)^{1 - \frac{1}{34} + \epsilon}
\end{equation*}
using variants of the circle method. In their
preprint~\cite[Theorem~1]{acharya2020taspect}, Acharya, Sharma, and Singh use similar
methods to prove
\begin{equation*}
  L(\tfrac{1}{2} + it, f \times g)
  \ll_\epsilon
  (1 + \lvert t \rvert)^{1 - \frac{1}{16} + \epsilon}.
\end{equation*}
This suggest that we can take $\eta''$ to be $1 - \frac{1}{16}$. Unfortunately, this
does not yield an improved bound for $\eta$ through
Lemma~\ref{lem:prime_to_not_prime}, but instead highlights the averaged second moment
bound~\eqref{line:etaprime_bound1} as the primary obstruction towards improvements to
Theorem~\ref{thm:main}.

If we assume the Generalized Lindel\"of Hypothesis, we would have
$\beta' = \eta' = \eta'' = 0$. Lemma~\ref{lem:prime_to_not_prime} then implies that
both $\beta$ and $\eta$ are bounded above by $\frac{1}{2}$.
The parametrization in Theorem~\ref{thm:criteria} then gives the following,
which implies Theorem~\ref{thm:conj} from \S\ref{sec:intro}.

\begin{theorem}
  Let $f = \sum A(n) n^{\frac{k-1}{2}} q^n$ be a weight $k$ holomorphic
  cuspidal Hecke eigenform on $\Gamma_0(N)$.
  Assume the Generalized Lindel\"of Hypothesis.
  For any $\epsilon > 0$ and all $X \gg 1$, we have that
  \begin{align}
    \sum_{n \leq X} A(n) r_2(n) &\ll_\epsilon X^{\frac{1}{2} + \epsilon},%
    \label{eq:l1_conj}
    \\
    \sum_{n \leq X} A^2(n) r_2(n) &= c_f X + O_\epsilon(X^{\frac{1}{2} + \epsilon}),%
    \label{eq:l2_conj}
  \end{align}
  for a constant $c_f$ depending on $f$.
  Further, the sequence $\{ A(n) : n = c^2 + d^2, (c, d) \in \mathbb{N}^2 \}$
  has at least one sign change among the indices $n \in [X, X +
  X^{\frac{1}{2} + \epsilon}]$, and there are at least $X^{\frac{1}{2} -
  \epsilon}$ sign changes in the interval $[X, 2X]$.
\end{theorem}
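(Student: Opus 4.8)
The plan is to obtain this theorem as a direct consequence of the machinery already assembled, with the Generalized Lindel\"of Hypothesis (GLH) supplying the analytic inputs in their strongest form; the same chain of reasoning is insensitive to the level, so it applies verbatim to forms on $\Gamma_0(N)$. First I would record that GLH gives Lindel\"of-quality bounds in the $t$-aspect for each of the relevant Rankin--Selberg convolutions $L(\tfrac12 + \epsilon + it, f \otimes g)$ with $g \in \{\theta^2, f, f \otimes \chi_{-4}\}$, namely $\lvert L(\tfrac12 + \epsilon + it, f \otimes g)\rvert \ll (1 + \lvert t\rvert)^{\epsilon}$. Substituting this into the definitions \eqref{line:betaprime_bound}, \eqref{line:etaprime_bound1}, and \eqref{line:etaprime_bound2}, the first moment, the averaged second moment, and the pointwise value all collapse to $T^{\epsilon}$, so one may take $\beta' = \eta' = \eta'' = 0$, exactly as asserted in the discussion preceding the statement.

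Next I would invoke Lemma~\ref{lem:prime_to_not_prime} with these values. The first inequality yields $\beta \leq 1 - \tfrac{1}{2(1+0)} = \tfrac12$, which is precisely \eqref{eq:l1_conj}. For the second, since $\max(\eta', 2\eta'' - 1) = \max(0, -1) = 0$, the inequality gives $\eta \leq 1 - \tfrac{1}{2(1+0)} = \tfrac12$; combined with the simple pole of $\widetilde{L}(s, f^2 \times \theta^2)$ at $s = 1$ recorded through \eqref{eq:Ltilde_equals}, which fixes the main term $c_f X$ and forces $\gamma = 1$, this is exactly \eqref{eq:l2_conj}.

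With the moment bounds in hand, I would apply Theorem~\ref{thm:criteria} to the sequence $\{A(n)\}$ with weights $w(n) = r_2(n)$. Deligne's bound furnishes $\alpha = \epsilon$, the pole gives $\gamma = 1$, and the two preceding paragraphs give $\beta = \tfrac12$ and $\eta = \tfrac12$. The admissibility threshold is then $\max(\alpha + \beta, \eta) - (\gamma - 1) = \tfrac12 + \epsilon$, so the admissible range is $\tfrac12 + \epsilon < r < 1$, and for any $r > \tfrac12$ the sequence $\{A(n) r_2(n)\}$ has a sign change in $[X, X + X^{r}]$ for all $X \gg 1$; taking $r = \tfrac12 + \epsilon$ gives the first assertion. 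Because $r_2(n) \geq 0$ leaves the sign of $A(n)$ unchanged on indices that are sums of two squares, this is a genuine sign change in $\{A(n) : n = c^2 + d^2\}$.

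Finally, to count sign changes in $[X, 2X]$, I would chain essentially disjoint intervals: starting from $Y_0 = X$ and iterating $Y_{j+1} = Y_j + Y_j^{1/2 + \epsilon}$, each $[Y_j, Y_{j+1}] \subseteq [X, 2X]$ contains a sign change by the above. Since the step size satisfies $Y_j^{1/2+\epsilon} \asymp X^{1/2+\epsilon}$ throughout $[X, 2X]$, the number of steps before exceeding $2X$ is $\gg X / X^{1/2 + \epsilon} = X^{1/2 - \epsilon}$, producing at least $X^{1/2 - \epsilon}$ distinct sign changes. Honestly, there is no serious obstacle here: all of the analytic difficulty is already absorbed into Lemma~\ref{lem:prime_to_not_prime} and Theorem~\ref{thm:criteria}, and the only points requiring genuine care are verifying that GLH legitimately applies to these specific $\GL(2) \times \GL(2)$ convolutions (so that the primed parameters truly vanish) and checking that the interval lengths in the counting step remain uniformly of size $X^{1/2+\epsilon}$ across $[X, 2X]$.
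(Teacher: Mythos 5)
Your proposal is correct and follows essentially the same route as the paper: GLH forces $\beta' = \eta' = \eta'' = 0$, Lemma~\ref{lem:prime_to_not_prime} then gives $\beta, \eta \leq \tfrac12$, and Theorem~\ref{thm:criteria} with $\alpha = \epsilon$, $\gamma = 1$ yields the sign change in $[X, X + X^{1/2+\epsilon}]$ and, by chaining intervals, the count $X^{1/2-\epsilon}$ in $[X,2X]$. The paper states this derivation only in outline, and your write-up simply fills in the same steps explicitly.
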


Thus the limit of this methodology is square-root type cancellation
in~\eqref{eq:l1_conj} and in the error term of~\eqref{eq:l2_conj}, and
approximately $\sqrt{X}$ many sign changes in intervals $[X, 2X]$.

\begin{remark}
  It is not clear whether we should expect~\eqref{eq:l1_conj}
  and~\eqref{eq:l2_conj} to be \emph{tight} bounds (up to $\epsilon$ powers).
  As the signs of $A(n)$ vary randomly, we would heuristically expect an error
  term \emph{at most} $\frac{1}{2} + \epsilon$ in~\eqref{eq:l1_conj}, but it's
  not clear that this should remain true when restricted to the subsequence of
  indices representable as sums of two squares.
  The Dirichlet series $L(s, f \otimes \theta^2)/L(2s,
  \chi_{-4})$, which we used to study~\eqref{eq:l1_conj}, has clear meromorphic
  continuation to $\mathbb{C}$.
  We expect that the critical zeros of $L(2s, \chi_{-4})$ in the denominator
  lead to poles, in which case~\cite[Theorem~1]{dld2020irregularity} implies that
  \begin{equation*}
    \sum_{n \leq X} A(n) r_2(n) = \Omega_{\pm}(X^{\frac{1}{4}}),
  \end{equation*}
  i.e.\ that the sum is of magnitude at least $X^{1/4}$ both positively and
  negatively infinitely often. There is a lot of space between $\frac{1}{4}$
  and $\frac{1}{2}$.

  In the Dirichlet series $\widetilde{L}$ (used to study~\eqref{eq:l2_conj}), the
  correction Euler product $U(s)$ only converges for $\Re s > \frac{1}{2}$, and is
  otherwise difficult to understand. There is little hope in gaining better
  understanding of this error term using Dirichlet series techniques.
\end{remark}

\section{Technical Proofs}

\subsection{Proof of Proposition~\ref{prop:S1bound}}\label{ssec:prop_S1_proof}

To prove Proposition~\ref{prop:S1bound}, we first construct $L(s, f \otimes
\theta^2)$ as a Rankin--Selberg 
convolution. Let $E_k(z, w)$ denote
\begin{equation}
  E_k(z, w) := \sum_{\gamma \in \Gamma_\infty \backslash \Gamma_0(4)}
  \chi_{-4}(\gamma) J(\gamma, z)^{-k} \Im (\gamma z)^w,
\end{equation}
the weight $k$ real analytic Eisenstein series on $\Gamma_0(4)$. For a matrix
$\big( \begin{smallmatrix} a & b \\ c & d \end{smallmatrix} \big) \in \Gamma_0(4)$,
$J(\gamma, z)$ denotes the normalized transformation factor
$J(\gamma, z) = (cz + d) / \lvert cz + d \rvert$ and
$\chi_{-4}(\gamma) = \chi_{-4}(d)$. This Eisenstein series has completion
$E_k^*(z, w) = \Lambda(2w, \chi_{-4}) E_k(z, w)$, where
$\Lambda(w, \chi_{-4}) = (4/\pi)^{\frac{w}{2}} \Gamma(\frac{w+1}{2}) L(w, \chi_{-4})$
is the completed Dirichlet $L$-function. The completed Eisenstein series $E^*(z, w)$
is entire and satisfies the functional equation $E_k^*(z, 1-w) = E_k^*(z, w)$.
(see~\cite{osullivan_formulasforeisenstein},~\cite[\S8]{stromberg_modularformsclassicalapproach},
or the author's thesis~\cite[\S2]{LowryDudaThesis} for similar
computations with weighted Eisenstein series).

Let $V(z)$ denote the product $V(z) = y^{\frac{k+1}{2}} f(z) \overline{\theta^2(z)}$.
Recall that $\theta(z)$ transforms with a prototypical half-integral weight
multiplier~\cite{shimura}, $\theta(\gamma z) = \widetilde{j}(\gamma, z) \theta(z)$, where
$\widetilde{j}(\gamma, z) = \varepsilon_d^{-1} \big( \frac{c}{d} \big) (cz +
d)^{\frac{1}{2}}$, where $\varepsilon_d = 1$ for $d \equiv 1 \bmod 4$ and
$\varepsilon_d = i$ for $d \equiv 3 \bmod 4$.
Thus $\theta^2(\gamma z) = \chi_{-4}(\gamma) (cz + d)$.

A short computation shows that $V(z)$ and $E_{k-1}(z, w)$ transform like weight
$(k-1)$ modular forms with character $\chi_{-4}$ with respect to the normalized
factor of automorphy $J(\gamma, z)$. Consider the Petersson inner product $\langle V(z),
E_{k-1}(z, w) \rangle$, which converges since $f$ is a holomorphic cuspform.

The classical ``unfolding'' argument constructs the Rankin--Selberg 
$L$-function:
\begin{align*}
  \langle &V(z), E_{k-1}(z, w) \rangle
  =
  \iint_{\Gamma_\infty \backslash \Gamma_0(4)}
  V(z) \overline{E_{k-1}(z, w)} d \mu(z)
  \\
  &= \int_0^\infty \int_0^1
  f(z) \overline{\theta^2}(z) y^{w + \frac{k+1}{2}} \frac{dx \; dy}{y^2}
  \\
  &= \int_0^\infty
  \sum_{\substack{m \geq 1 \\ n \geq 0}} A(n) n^{\frac{k-1}{2}} r_2(m)
  \Big(
    \int_0^1 e^{2 \pi i (n - m) x} dx
  \Big)
  e^{-2 \pi (n + m) y} y^{w + \frac{k-1}{2}} \frac{dy}{y}
  \\
  &=
  \frac{\Gamma(w + \frac{k-1}{2})}
       {(4 \pi)^{w + \frac{k-1}{2}}}
  \sum_{n \geq 1} \frac{A(n) r_2(n)}{n^w}.
\end{align*}
Thus we define the completed Rankin--Selberg 
$L$-function
\begin{align*}
  \Lambda(s, f \otimes \theta^2)
  :=&
  (4 \pi)^{\frac{k-1}{2}} \Lambda(2s, \chi_{-4})
  \frac{\Gamma(s + \frac{k-1}{2})}
       {(4 \pi)^{s + \frac{k-1}{2}}}
  \sum_{n \geq 1} \frac{A(n) r_2(n)}{n^s}
  \\
  =&
  \frac{1}{\pi^{2s}}
  \Gamma(s + \tfrac{k-1}{2}) \Gamma(s + \tfrac{1}{2})
  L(2s, \chi_{-4})
  \sum_{n \geq 1} \frac{A(n) r_2(n)}{n^s},
\end{align*}
which inherits the functional equation
$\Lambda(s, f \otimes \theta^2) = \Lambda(1-s, f \otimes \theta^2)$
from $E^*_{k-1}(s, z)$.

Define the coefficients
\begin{equation}
  L(s, f \otimes \theta^2)
  = L(2s, \chi_{-4})  \sum_{n \geq 1} \frac{A(n) r_2(n)}{n^s}
  =: \sum_{n \geq 1} \frac{b(n)}{n^s}.
\end{equation}
We apply Landau's method (as applied to general Dirichlet series with functional
equation in~\cite{chand}) to get bounds on partial sums of $b(n)$.
In the notation of~\cite{chand}, we have $\delta = 1, A = 2$, and $\Delta(s) =
\Gamma(s + \frac{k-1}{2})\Gamma(s + \frac{1}{2})$, which implies that
\begin{equation}\label{eq:from_CN}
  \sum_{n \leq X} b(n)
  \ll
  \sum_{X < n < X + O(y)} \lvert b(n) \rvert
  +
  X^{\frac{3}{2} + \epsilon} y^{-\frac{3}{2}}.
\end{equation}
As
\begin{equation}
  b(n) = \sum_{d^2 \mid n} \chi_{-4}(d^2) A(n / d^2) r_2(n / d^2),
\end{equation}
we have that $b(n) \ll n^{\epsilon}$. Thus~\eqref{eq:from_CN} can be written as
\begin{equation*}
  \sum_{n \leq X} b(n)
  \ll
  y^{1 + \epsilon}
  +
  X^{\frac{3}{2} + \epsilon} y^{-\frac{3}{2}}.
\end{equation*}
This is balanced by choosing $y = X^{3/5}$, giving that
\begin{equation}\label{eq:bn_bound}
  \sum_{n \leq X} b(n) \ll X^{\frac{3}{5} + \epsilon}.
\end{equation}

\begin{remark}
  The $\epsilon$ power can be removed, giving an error term $O(X^{3/5})$, by
  applying a more technical analysis in Landau's method.
  This follows from the main theorem of the
  preprint~\cite{lowrydudathornetani}, which uniformizes~\cite{chand}.
  This would not improve the quantitative sign change result.
\end{remark}

To complete the proof of Proposition~\ref{prop:S1bound}, we recover bounds on
$S_1(X)$ from~\eqref{eq:bn_bound}. We do this with a form of M\"{o}bius inversion.

\begin{lemma}
  Suppose $a(n)$ and $b(n)$ are arithmetic functions satisfying the relation
  \begin{equation*}
    b(n) = \sum_{d^2 \mid n} a(n/d^2) \chi(d^2)
  \end{equation*}
  for a Dirichlet character $\chi$. Then
  \begin{equation*}
    a(n) = \sum_{d^2 \mid n} \mu(d) \chi(d^2) b(n/d^2).
  \end{equation*}
\end{lemma}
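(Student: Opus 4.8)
The plan is to verify the inversion directly by substitution, using only the complete multiplicativity of the Dirichlet character $\chi$ and the basic Möbius identity $\sum_{d \mid f} \mu(d) = 0$ for $f > 1$. I would insert the defining relation for $b$ into the proposed expression for $a$ and reorganize the resulting double sum so that the Möbius factors collapse.

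Concretely, first I would substitute $b(n/d^2) = \sum_{e^2 \mid (n/d^2)} a(n/(d^2 e^2)) \chi(e^2)$ into the claimed formula
\begin{equation*}
  \sum_{d^2 \mid n} \mu(d)\chi(d^2) b(n/d^2),
\end{equation*}
obtaining a double sum over pairs $(d,e)$ subject to $d^2 \mid n$ and $e^2 \mid (n/d^2)$. These two constraints are jointly equivalent to $(de)^2 \mid n$, so I would reindex by setting $f = de$, with $d$ ranging over divisors of $f$. Using complete multiplicativity, $\chi(d^2)\chi(e^2) = \chi((de)^2) = \chi(f^2)$, and the argument of $a$ becomes $n/f^2$. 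The double sum therefore factors as
\begin{equation*}
  \sum_{f^2 \mid n} \chi(f^2) a(n/f^2) \sum_{d \mid f} \mu(d),
\end{equation*}
and since the inner sum equals $1$ when $f = 1$ and vanishes otherwise, only the term $f = 1$ survives, leaving $a(n)$ exactly.

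The argument is elementary and there is no serious obstacle; the only point requiring care is the reindexing step, where one must check that the compound divisibility condition is captured precisely by summing $d$ over divisors of $f$ with $f^2 \mid n$, and that $\chi$ being a Dirichlet character (hence completely multiplicative) is genuinely used to merge $\chi(d^2)\chi(e^2)$ into $\chi(f^2)$. As an alternative I would note that the identity can be read off from Dirichlet series: the hypothesis says $\sum_n b(n) n^{-s} = F(2s) \sum_n a(n) n^{-s}$ with $F(s) = \sum_d \chi(d^2) d^{-s}$, whose Euler product gives $1/F(s) = \sum_d \mu(d)\chi(d^2) d^{-s}$, and multiplying through by $1/F(2s)$ recovers the inversion. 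The combinatorial proof above is simply this computation carried out at the level of coefficients, avoiding any convergence considerations.
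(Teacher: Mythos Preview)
Your proof is correct and follows essentially the same route as the paper: substitute the defining relation for $b$, reindex the double sum by the product $f = de$ (the paper uses $D = d\ell$), merge the character values via complete multiplicativity, and collapse with the M\"obius identity. The Dirichlet series remark you append is a nice sanity check but not needed for the argument.
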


This is a twisted form of the inversion implicitly used in~\cite[\S4]{rankin}.

\begin{proof}
  We compute directly that
  \begin{align*}
    \sum_{d^2 \mid n} \mu(d) \chi(d^2) b(n/d^2)
    &=
    \sum_{d^2 \mid n} \mu(d) \chi(d^2) \sum_{\ell^2 \mid \frac{n}{d^2}} a(n/d^2 \ell^2) \chi(\ell^2)
    \\
    &=
    \sum_{n = d^2 \ell^2 k} \mu(d) \chi^2(d\ell) a(k)
    =
    \sum_{n = D^2 k} \chi^2(D) a(k) \sum_{d \mid D} \mu(d)
    \\
    &=
    \chi^2(1) a(n) = a(n).
  \end{align*}
\end{proof}

It follows that
\begin{equation*}
  A(n) r_2(n) = \sum_{d^2 \mid n} \mu(d) \chi_{-4}(d^2) b(n / d^2),
\end{equation*}
and thus
\begin{align*}
  S_1(X) = \sum_{n \leq X} A(n) r_2(n)
  &=
  \sum_{n \leq X} \sum_{d^2 \mid n} \mu(d) \chi_{-4}^2(d) b(n / d^2)
  \\
  &=
  \sum_{d \leq \sqrt{X}} \mu(d) \chi_{-4}^2(d) \sum_{m \leq \frac{X}{d^2}} b(m)
  \\
  &\ll_\epsilon
  \sum_{d \leq \sqrt{X}} \mu(d) \chi_{-4}^2(d)
  \Big( \frac{X^{\frac{3}{5} + \epsilon}}{d^{\frac{6}{5} + \epsilon}} \Big)
  \ll_\epsilon X^{\frac{3}{5} + \epsilon}
\end{align*}
as the $d$ sum is absolutely bounded by $\zeta(6/5)$. This completes the proof of
Proposition~\ref{prop:S1bound}.

\subsection{Proof of Lemma~\ref{lem:prime_to_not_prime}}\label{ssec:lem_prime_proof}

We apply an explicit truncated Perron's formula, similar to the analysis
in~\cite{banerjeepandey20}. We use the presentation of Perron's formula
in Theorem~5.1, Theorem~5.2, and Corollary~5.3
of~\cite{montgomery_multnumtheory}. These imply that
\begin{align*}
  \sum_{n \leq X}{}^{'} A(n) r_2(n)
  &=
  \frac{1}{2\pi i} \int_{\sigma - iT}^{\sigma + iT}
  \frac{L(s, f \otimes \theta^2)}{L(2s, \chi_{-4})} \frac{X^s}{s} ds
  +
  O\Big(\frac{X^{1 + \epsilon}}{T}\Big),
  \\
  \sum_{n \leq X}{}^{'} A^2(n) r_2(n)
  &=
  \frac{1}{2 \pi i} \int_{\sigma - iT}^{\sigma + iT}
  \widetilde{L}(s, f^2 \times \theta^2) \frac{X^s}{s} ds
  +
  O\Big(\frac{X^{1 + \epsilon}}{T}\Big),
\end{align*}
for $\sigma = 1 + \epsilon$ for any $\epsilon > 0$, where the primes on the
sums indicate that the last term is to be divided by $2$ if $X$ is an
integer, and where we have used that $A(n) r_2(n) \ll n^{\epsilon}$ in the
remainder term approximations.
In both cases, we move the line of integration to $\frac{1}{2} + \epsilon$ and apply
Cauchy's Residue Theorem to get that
\begin{align*}
  \sum_{n \leq X}{}^{'} A(n) r_2(n)
  &=
  \Big(\int_{\mathrm{left}} + \int_{\mathrm{top}} + \int_{\mathrm{bot}}\Big)
  \frac{L(s, f \otimes \theta^2)}{L(2s, \chi_{-4})} \frac{X^s}{s} ds
  +
  O\Big(\frac{X^{1 + \epsilon}}{T}\Big),
  \\
  \sum_{n \leq X}{}^{'} A^2(n) r_2(n)
  &=
  \Big(\int_{\mathrm{left}} + \int_{\mathrm{top}} + \int_{\mathrm{bot}}\Big)
  \widetilde{L}(s, f^2 \times \theta^2)
  \frac{X^s}{s} ds
  +
  c_f X
  +
  O\Big(\frac{X^{1 + \epsilon}}{T}\Big),
\end{align*}
where the integrals are over the line segments
$[1 + \epsilon + iT, \frac{1}{2} + \epsilon + iT]$,
$[\frac{1}{2} + \epsilon - iT, 1 + \epsilon - iT]$, and
$[\frac{1}{2} + \epsilon + iT, \frac{1}{2} + \epsilon - iT]$
(respectively the left, top, and bottom sides of the box).
In this expression, $c_f$ is the residue of $\widetilde{L}$ at the simple pole at
$s = 1$.

We first consider $S_1(X)$. With the classical bound
$1 / \lvert L(1 + \epsilon + i t, \chi_{-4})\rvert
\ll \log (2 + \lvert t \rvert)$ as $\lvert t \rvert \to \infty$, we bound the
left integral via
\begin{align*}
  \int_{\mathrm{left}} \frac{L(s, f \otimes \theta^2)}{L(2s, \chi_{-4})} \frac{X^s}{s} ds
  &\ll
  X^{\frac{1}{2} + \epsilon}
  \Big(
    1 + \int_1^T \frac{%
      \log(t) \lvert L(\frac{1}{2} + \epsilon + it, f \otimes \theta^2) \rvert
    }{t} dt
  \Big)
  \\
  &\ll
  X^{\frac{1}{2} + \epsilon}
  \Big(
    1 +
    \sum_{n \leq \log_2 T}
    \frac{2^{n+1}}{T^{1 - \epsilon}} \int_{T/2^{n+1}}^{T/2^n}
    \lvert L(\tfrac{1}{2} + \epsilon + it, f \otimes \theta^2) \rvert dt
  \Big)
  \\
  &\ll
  X^{\frac{1}{2} + \epsilon}
  +
  X^{\frac{1}{2} + \epsilon}
  T^{\beta' + \epsilon},
\end{align*}
where in the last steps we bounded the integral dyadically
using~\eqref{line:betaprime_bound}.

For the top and bottom segments, we use the na\"{\i}ve
convexity bound. The explicit functional equation is given in
\S\ref{ssec:prop_S1_proof}, but we use little more than the fact that
$L(s, f \otimes \theta^2)$ is of degree $4$. The convexity bound is
\begin{equation}
  L(\sigma + it, f \otimes \theta^2)
  \ll
  (1 + \lvert t \rvert)^{2 (1 - \sigma) + \epsilon}
\end{equation}
for $0 < \sigma < 1$. Thus the top and bottom segments satisfy
\begin{align*}
  \Big(\int_{\mathrm{top}} &+ \int_{\mathrm{bot}}\Big)
  \frac{L(s, f \otimes \theta^2)}{L(2s, \chi_{-4})} \frac{X^s}{s} ds
  \\
  &\ll
  \int_{\frac{1}{2} + \epsilon}^{1 + \epsilon}
  \lvert L(\sigma + iT, f \otimes \theta^2) \rvert \log(T)
  \frac{X^{\sigma}}{T} d\sigma
  \\
  &\ll
  \int_{\frac{1}{2} + \epsilon}^{1 + \epsilon} \log T
  \frac{\max_{\frac{1}{2} + \epsilon < \sigma < 1 + \epsilon}
        \lvert L(\sigma + iT, f \otimes \theta^2) X^\sigma \rvert}
       {T}
  d\sigma
  \\
  &\ll
  X^{\frac{1}{2} + \epsilon}
  +
  \frac{X^{1 + \epsilon}}{T},
\end{align*}
where we have used the bound $\log T \ll T^{\epsilon}$.

In total, the sum is bounded by
\begin{equation*}
  S_1(X)
  \ll
  \sum_{n \leq X}{}^{'} A(n) r_2(n)
  \ll
  X^{\frac{1}{2} + \epsilon}
  T^{\beta' + \epsilon}
  +
  \frac{X^{1 + \epsilon}}{T}
  \ll
  X^{1 - \frac{1}{2(\beta' + 1)} + \epsilon},
\end{equation*}
where we chose $T = X^{1/2(\beta' + 1)}$ to balance the error terms.

We treat $S_2(X)$ more carefully as $\widetilde{L}$ is not an $L$-function and has
much worse trivial bounds.
Recall the decomposition for $\widetilde{L}$ from~\eqref{eq:Ltilde_equals}. For $\Re
s > \frac{1}{2} + \epsilon$, we note that
\begin{equation*}
  \frac{U(s)}{\zeta(2s) L(2s, \chi_{-4})} \ll (1 + \lvert t \rvert)^\epsilon
\end{equation*}
follows from the absolute convergence for $U(s)$ and classical bounds for Dirichlet
$L$-functions and the zeta function. The Cauchy-Schwarz inequality gives
\begin{align*}
  \int_{\mathrm{left}} &\widetilde{L}(s, f^2 \times \theta^2) \frac{X^s}{s} ds
  \\
  &\ll
  X^{\frac{1}{2} + \epsilon}
  \Big(
    1
    +
    \int_{1}^T
    \frac{%
      \lvert
        L(\tfrac{1}{2} + \epsilon + it, f \otimes f)
      \lvert^2
    }{t}
    t^\epsilon
    dt
  \Big)^{\frac{1}{2}}
  \\
  &\qquad \times
  \Big(
    1
    +
    \int_{1}^T
    \frac{%
      \lvert
        L(\tfrac{1}{2} + \epsilon + it, f \otimes f \otimes \chi_{-4})
      \rvert^2
    }{t}
    t^\epsilon
    dt
  \Big)^{\frac{1}{2}}.
\end{align*}
Analogous to before, we dyadically split the integrals and bound each integral
with~\eqref{line:etaprime_bound1}. Collecting these
bounds gives that
\begin{equation*}
  \int_{\mathrm{left}} \widetilde{L}(s, f^2 \times \theta^2) \frac{X^s}{s} ds
  \ll
  X^{\frac{1}{2} + \epsilon}
  +
  X^{\frac{1}{2} + \epsilon} T^{\eta' + \epsilon}.
\end{equation*}

For the top and bottom segments, it insufficient to use the na\"{\i}ve
convexity bound. Instead, we use the convexity bound that follows
from the subconvexity estimate~\eqref{line:etaprime_bound2},
\begin{equation*}
  \widetilde{L}(\sigma + it, f^2 \times \theta^2)
  \ll
  (1 + \lvert t \rvert)^{4\eta''(1 - \sigma) + \epsilon}
\end{equation*}
for $\frac{1}{2} < \sigma < 1$. Taking absolute values and bounding shows that
\begin{align*}
  \Big(\int_{\mathrm{top}} &+ \int_{\mathrm{bot}}\Big)
  \widetilde{L}(s, f^2 \times \theta^2) \frac{X^s}{s} ds
  \\
  &\ll
  \int_{\frac{1}{2} + \epsilon}^{1 + \epsilon} T^\epsilon
  \frac{\max_{\frac{1}{2} + \epsilon < \sigma < 1 + \epsilon}
        \lvert \widetilde{L}(\sigma + iT, f^2 \times \theta^2) X^\sigma \rvert}
       {T}
  d\sigma
  \\
  &\ll
  X^{\frac{1}{2} + \epsilon} T^{2\eta'' - 1 + \epsilon}
  +
  \frac{X^{1 + \epsilon}}{T^{1 - \epsilon}}.
\end{align*}

In total, we find that
\begin{align*}
  S_2(X)
  &\ll
  \sum_{n \leq X}{}^{'} A^2(n) r_2(n)
  \ll
  c_f X
  +
  X^{\frac{1}{2} + \epsilon} \big(
    T^{\eta' + \epsilon} + T^{2\eta'' - 1 + \epsilon}
  \big)
  +
  \frac{X^{1 + \epsilon}}{T^{1 - \epsilon}}
  \\
  &\ll
  c_f X
  +
  X^{1 - \frac{1}{2(1 + \max(\eta', 2\eta'' - 1))} + \epsilon}
\end{align*}
after optimizing the bound by choosing
$T = X^{1/2(1 + \max(\eta', 2\eta'' - 1))}$.

\vspace{5 mm}
{\small
\bibliographystyle{alpha}
\bibliography{bibfile}
}

\end{document}